\newtheorem{theorem}{Theorem}[section]
\newtheorem{lemma}[theorem]{Lemma}
\theoremstyle{definition}
\newtheorem{remark}[theorem]{Remark}
\numberwithin{equation}{section}
\begin{document}

\title{Surfaces of small diameter with large width}

\author[Y.~Liokumovich]{Yevgeny Liokumovich}
\address {Department of Mathematics, University of Toronto, Toronto, Canada}
\email{liokumovich@math.toronto.edu}

\begin{abstract}
Given a 2-dimensional surface $M$ and a constant $C$
we construct a Riemannian metric $g$, so that
 diameter $diam(M,g) \leq 1$ and every $1-$cycle
 dividing $M$ into two regions of equal area has length $> C$.
 It follows that there exists no universal inequality 
 bounding $1-$width of $M$ in terms of its diameter.
 This answers a question of St\'ephane Sabourau.
\end{abstract}

\date{}

\maketitle

\section{Introduction}

In this paper we study the relationship between isoperimetric profile of 
a Riemannian closed 2-surface $M$ and its diameter $d$. Let $g$ denote the genus of
$M$. The first result of this paper is

\begin{theorem} \label{subdivision}
For every $\epsilon \in (0,\frac{1}{2})$ there exists a constant $C(\epsilon) \leq \epsilon ^{-\frac{1}{\log_2(3/2)}}$, such that
for any Riemannian metric on $M$ one can find a simple closed curve of length
$\leq (C(\epsilon)+ 8 g) d$ subdividing $M$ into two regions, each of area 
$\geq (\frac{1}{2}-\epsilon) Area(M)$.
\end{theorem}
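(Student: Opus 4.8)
\emph{Proof plan.} The plan is to first strip the topology off $M$, then cut the resulting disk into many pieces of small area, and finally reassemble roughly half of them. Fix a point $p \in M$. A standard argument produces a system of $2g$ simple loops based at $p$, pairwise disjoint apart from $p$ and each of length $\le 2d$, whose complement is an open disk; cutting $M$ along this system yields a compact Riemannian disk $D$ with $\mathrm{Area}(D) = \mathrm{Area}(M)$ and $\mathrm{length}(\partial D) \le 8gd$. The key feature of $D$ is that every point $x \in D$ lies within distance $d$, measured in $D$, of $\partial D$: a minimizing path in $M$ from $x$ to $p$, truncated at its first intersection with the cut system, realizes this. (When $g = 0$ one instead works with $M$ directly and, until a first cut manufactures a boundary, uses the distance from a fixed basepoint in place of the distance from $\partial D$ below.)

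It then suffices to produce a $1$-complex $\Gamma \subset D$ with $\mathrm{length}(\Gamma) \le C(\epsilon)\, d$, $C(\epsilon) = \epsilon^{-1/\log_2(3/2)}$, such that every component of $D \setminus \Gamma$ has area $\le \epsilon\, \mathrm{Area}(M)$. Indeed, once the pieces are this small, one can greedily two-colour them so that each colour class has total area within $\epsilon\, \mathrm{Area}(M)$ of $\tfrac12 \mathrm{Area}(M)$; the curve separating the two classes is a subcomplex of $\Gamma \cup \partial D$, so after regluing $D$ to $M$ it has length $\le C(\epsilon)\, d + 8gd$ and bounds a region of area in $[(\tfrac12-\epsilon)\mathrm{Area}(M), (\tfrac12+\epsilon)\mathrm{Area}(M)]$. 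Since the dual graph of the decomposition $D \setminus \Gamma$ is a tree, one can choose the colouring and perform a short surgery along $\Gamma$ so that this separating curve is in fact a single simple closed curve of the same order of length; I regard this last point as a technicality rather than the crux.

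The complex $\Gamma$ is built by iterating the following. \emph{Claim:} if $\Omega$ is a compact surface with boundary in which every point lies within distance $\delta$ of $\partial\Omega$, then there is a simple closed curve or arc in $\Omega$ of length $\le \delta$ that cuts $\Omega$ into two (possibly disconnected) parts, each of area $\le \tfrac23 \mathrm{Area}(\Omega)$, each again within distance $\delta$ of its own boundary. The inheritance of the hypothesis is immediate — a short path in $\Omega$ to $\partial\Omega$ either survives inside one of the parts or is severed by the cut even sooner. Starting from $D$ with $\delta = d$ and applying the Claim about $\log_{3/2}(1/\epsilon)$ times — each round roughly doubling the number of regions while shrinking their areas by the factor $\tfrac23$ — produces on the order of $\epsilon^{-1/\log_2(3/2)}$ regions of area $\le \epsilon\, \mathrm{Area}(M)$; since each of the cuts has length $\le d$ and the number of cuts is, after a careful count, at most $C(\epsilon) = \epsilon^{-1/\log_2(3/2)}$, the union $\Gamma$ of all cuts has $\mathrm{length}(\Gamma) \le C(\epsilon)\, d$.

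To prove the Claim, set $f = d_\Omega(\cdot, \partial\Omega) \in [0,\delta]$ and $a(t) = \mathrm{Area}\{f > t\}$, a continuous decreasing function from $\mathrm{Area}(\Omega)$ to $0$; choose $t_1 \le t_2$ with $a(t_1) = \tfrac23 \mathrm{Area}(\Omega)$ and $a(t_2) = \tfrac13 \mathrm{Area}(\Omega)$. The coarea inequality gives $\int_{t_1}^{t_2} \mathrm{length}(f^{-1}(t))\, dt \le \mathrm{Area}\{t_1 < f < t_2\} = \tfrac13 \mathrm{Area}(\Omega)$. If $t_2 - t_1 \ge \mathrm{Area}(\Omega)/(3\delta)$, then some regular level $f^{-1}(t)$ has length $\le \delta$ and separates $\{f > t\}$ (of area $a(t) \in [\tfrac13, \tfrac23]\,\mathrm{Area}(\Omega)$) from its complement, so we take $\sigma = f^{-1}(t)$. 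Otherwise the band $N = \{t_1 < f < t_2\}$ has width $t_2 - t_1 < \mathrm{Area}(\Omega)/(3\delta)$, hence every point of $N$ lies within $t_2 - t_1 \le \delta$ of $\partial N$; apply the Claim to $N$ and take the resulting curve near the boundary component of $N$ at $f$-level $t_1$, i.e. essentially parallel to a boundary curve of $\Omega$ — it cuts off a region containing $\{f \le t_1\}$ whose area is between $\tfrac13 \mathrm{Area}(\Omega)$ and $\tfrac13 \mathrm{Area}(\Omega) + \mathrm{Area}(N) \le \tfrac23 \mathrm{Area}(\Omega)$. This "concentrated" alternative is the heart of the matter and the main obstacle: one must show its nested invocations terminate. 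Here the areas $\mathrm{Area}(N^{(j)}) = (\tfrac13)^j \mathrm{Area}(\Omega)$ stay large in absolute terms while the widths $\delta^{(j)}$ satisfy $\delta^{(j)}\delta^{(j+1)} < (\tfrac13)^{j+1}\mathrm{Area}(\Omega)$ and so collapse geometrically; one argues that this forces the "generic" alternative to occur — or produces, via the coarea inequality once a band is thin enough, a single circle through it of length $\le d$ that separates $\Omega$ in the required proportions — after boundedly many steps.
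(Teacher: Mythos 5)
Your first step (a homology basis of $\le 2g$ short loops through a point, cutting to a disc $D$ with $|\partial D|\le 8gd$ in which every point is within $d$ of the boundary) and your overall strategy (cut $D$ into pieces of area $\le\epsilon\,\mathrm{Area}(M)$, then reassemble about half of them) are exactly the paper's. But the crux of the argument is your ``Claim'' — that a surface all of whose points lie within $\delta$ of its boundary admits a subdividing arc or circle of length comparable to $\delta$ cutting off between $\tfrac13$ and $\tfrac23$ of the area — and this is precisely the nontrivial ingredient that the paper does not reprove but imports as Theorem 6.1B of \cite{LNR} (there with length bound $2\sup_x \mathrm{dist}(x,\partial D)+\delta$, not $\delta$; your stronger constant is itself unjustified). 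Your proposed proof of the Claim via the coarea inequality for $f=\mathrm{dist}(\cdot,\partial\Omega)$ only works in the ``thick band'' case; in the ``thin band'' case — a collar of area $\tfrac13\mathrm{Area}(\Omega)$ in which every level set of $f$ is long — you recurse into the band and then concede that showing the nested invocations terminate is ``the heart of the matter and the main obstacle,'' offering only a heuristic about widths collapsing geometrically. That is exactly the configuration where the naive level-set argument breaks down (think of a long thin collar: all levels of the distance function are long, and the short curves one needs are transversal to the levels, not among them), so the Claim is not proved and the gap is essential, not technical.

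Two secondary points would also need repair even granting the Claim. First, a short level set $f^{-1}(t)$ is in general a union of several circles, so your cuts need not be single arcs or circles, and your assertion that the dual graph of $D\setminus\Gamma$ is a tree (used for the two-colouring and the final surgery) is not justified for the 1-complex $\Gamma$ your recursion produces. Second, the paper gets a \emph{single simple closed} curve bounding a region of area within $\epsilon$ of $\tfrac12\mathrm{Area}(M)$ by subdividing into subdiscs and invoking shellability of the resulting cell decomposition, adjoining subdiscs one at a time so the union stays a disc; your ``greedy two-colouring plus a short surgery'' is a sketch of a genuinely needed step, not a technicality, since an arbitrary colour class can be disconnected and its boundary a complicated 1-complex. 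In short: correct skeleton, but the key subdivision lemma (the paper's imported Theorem 6.1B of \cite{LNR}) and the simple-closed-curve assembly are not established by your argument.
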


For a non-orientable surface the genus $g$ in the theorem can be replaced by a smaller 
quantity $\frac{1}{2} rank H_1(M, \mathbb{Z}_2 )$.
This is a generalization of Theorem 6.2 in \cite{LNR}, where the result is proven for 2-spheres.

In this paper we show that the optimal constant in
Theorem \ref{subdivision} goes to infinity as
$\epsilon \rightarrow 0$. 

This is the case even if we use a collection of closed curves 
to subdivide $M$. We will formulate the result for Lipschitz 1-cycles,
which generalize finite combinations of closed curves. 
For example, they include curves traversed multiple times.
We will construct Riemannian closed surfaces
 of small diameter with the following property:
if a minimal filling of a 1-cycle has area close to half of the
area of the surface, then the cycle must be very long. 
As a special case this implies that a collection of 
simple closed curves subdividing $M$ into two parts of equal area
must have large total length.

Let $\Bbbk$ be an abelian group.
Recall that Lipschitz $k-$chains with coefficients in $\Bbbk$ are finite sums 
$\sum a_i f_i$, where $a_i \in \Bbbk$ and $f_i$
are Lipschitz maps from the $k-$simplex $\Delta ^k$ to $M$. 
The volume of a $k-$chain $c=\sum a_i f_i$
denoted by $|c|$ is defined to be $\sum |a_i| \int _{\Delta^k} f_i ^{*} g$. The boundary map $\delta$
is defined as in the singular homology theory. 

Throughout this paper we will assume that $\Bbbk = \mathbb{Z}$ 
if $M$ is orientable and $\Bbbk= \mathbb{Z}_2$ otherwise.

Given a Lipschitz $k-$cycle $c$ we define its filling volume
$Fill(c) = \inf \{ |A|\}$, where the infimum is taken over all $k+1$
Lipschitz chains $A$ with $\partial A =c$. 

\begin{theorem} \label{main}
For every closed surface $M$ and every $C>0$ there exists 
a Riemannian metric $g$ on $M$, 
such that diameter of $(M,g)$ equals $1$
and every Lipschitz 1-cycle $c$
with $Fill(c)= \frac{1}{2}Area(M)$
has length $|c|\geq C$.
\end{theorem}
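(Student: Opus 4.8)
The plan is to produce the metric of Theorem~\ref{main} by a recursive construction with a parameter $n$: a metric $g_n$ on $M$ of diameter $1$ for which every Lipschitz $1$-cycle $c$ with $Fill(c)=\tfrac12 Area(M)$ satisfies $|c|\ge W_n$, where $W_n\to\infty$; given $C$ one takes $n$ with $W_n\ge C$. I would first reduce to $M=S^{2}$: given such a metric on $S^{2}$, attach the $g$ handles of $M$ (or realize a $\mathbb Z_2$-homology basis, in the non-orientable case) inside a ball of small radius $\varepsilon$. This changes diameter and total area by $O(\varepsilon)$, core curves of the handles are short but non-separating, and since a minimal filling is essentially a region (up to the coefficient group), a cycle with $Fill=\tfrac12 Area$ still bounds a region meeting the complement of the ball in essentially a half-area region of $(S^{2},g_n)$, and so inherits the lower bound up to $O(\varepsilon)$.

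For the sphere I would build $g_n$ self-similarly. Let $(S^{2},g_0)$ be round. To pass from $g_n$ to $g_{n+1}$, take one ``central'' rescaled copy $Z$ of $(S^{2},g_n)$ together with a large number $m$ of smaller rescaled copies, delete a small disk from each, and glue the small copies to $Z$ along thin cylindrical necks, placing the attaching disks on $Z$ at points which, by the inductive structure of $g_n$, cannot be separated into two balanced groups by a short curve. The rescaling factors, the neck lengths and $m$ are chosen so that: (i) the diameters of all pieces together with all neck lengths form a convergent geometric series, so $\mathrm{diam}(S^{2},g_{n+1})$ stays bounded independently of $n$, after which one rescales to make the diameter exactly $1$; (ii) the total area grows by a fixed factor at each step; and (iii) the piece-areas are arithmetically incommensurable, so that no subcollection of whole pieces has total area exactly $\tfrac12 Area$, and in fact whole pieces approximate $\tfrac12 Area$ only with a quantitative error depending on how many pieces are used. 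Placing the attaching points on a pathological copy rather than on a round sphere is essential: on a round sphere an equator separates any point configuration cheaply, which would make $g_{n+1}$ easy to bisect, whereas the inductive hypothesis rules this out.

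The core is the inductive lower bound, which I would phrase for the filling profile $I_n(t)=\inf\{|c|:\ Fill(c)=t\}$ and prove that $I_n$ is large on a wide range of $t$ around $\tfrac12 Area$. Given $c$ with $Fill(c)=\tfrac12 Area$ in $g_{n+1}$, replace it by the boundary of a mass-minimizing filling; after a standard reduction this filling may be taken to be a multiplicity-one region (uniformly in $\mathbb Z$ or $\mathbb Z_2$), so we may treat $c$ as $\partial R$ with $|R|=\tfrac12 Area$. One then bounds $\mathrm{length}(c\cap P)$ from below piece by piece: if $R$ occupies an intermediate fraction of a piece $P$, the relative isoperimetric inequality in $P$ (round sphere) or the inductive estimate for the rescaled copy $P$ gives a definite lower bound; if $R$ contains $P$ or misses it, then $c$ crosses the neck above $P$ and picks up its circumference. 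The incommensurability forces $c$ to cut into at least one piece by an intermediate amount (it cannot merely assemble whole pieces), and the separation property of the attaching points forces any attempt to collect roughly half the pieces to use a long curve through $Z$; choosing $m$ and the rescalings so that the gain from having $m$ pieces beats the rescaling loss then gives $W_{n+1}\ge\lambda W_n$ with $\lambda>1$, hence $W_n\to\infty$.

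The step I expect to be the main obstacle is precisely this last bookkeeping: one must charge \emph{every} candidate short cycle --- those collecting whole pieces through $Z$, those cutting slightly into many pieces, and those exploiting several necks at once --- against either a sum of neck circumferences or the inductive width of a rescaled copy, and then make the resulting inequality survive the rescaling uniformly in $n$. This is the quantitative heart of the construction; it is what pins down the admissible parameters, and it is where the exponent $1/\log_2(3/2)$ of Theorem~\ref{subdivision} should resurface as the sharp matching rate.
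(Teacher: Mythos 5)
There is a genuine gap, and you have located it yourself: the entire difficulty of Theorem~\ref{main} sits in the ``bookkeeping'' step you postpone, and the construction as described does not obviously survive it. Three concrete problems. First, thin necks are cheap to cross: a cycle made of roughly half of the neck circles, plus a short correction arc inside a single piece to fix the area defect, has length of order (number of necks)$\times$(neck circumference) plus the correction, and nothing in your setup prevents this from being small; to rule it out you need the per-crossing cost to be bounded below by a constant independent of the stage, which conflicts with ``thin''. Second, incommensurability of the piece areas only forbids \emph{exact} bisection by whole pieces; what you actually need is a quantitative arithmetic statement that any signed combination of few piece-areas stays far from half the total, so that a short cycle (few crossings, few pieces entered) cannot come close. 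This is precisely the role of Lemma~\ref{sum} in the paper (signed sums of powers of $3$ versus $\tfrac12 N(h)$), and without an analogue your induction does not start. Third, the inductive hypothesis degrades under rescaling: a copy scaled by $r<1$ has width $rW_n$, so $W_{n+1}\ge\lambda W_n$ must be extracted by summing contributions over many pieces and necks simultaneously, uniformly in $n$ --- exactly the step you leave open. Two further points are treated too casually: the ``standard reduction'' of a mass-minimizing filling of a Lipschitz $1$-cycle to a multiplicity-one region is not justified (cycles may traverse curves with multiplicity, and the paper deliberately works with chains and an isoperimetric inequality instead); and the handle attachment cannot be dismissed with $O(\varepsilon)$ --- a cycle may wind many times around a small handle, so the filling's mass there is not automatically small. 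The paper handles this with a dedicated lemma bounding $\inf|\gamma|/\sqrt{Fill(\gamma)}$ on the handle part, an arc-replacement argument along $\partial\Sigma$, and by arranging the handle region to have the same area as the deleted disc.

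For comparison, the paper avoids any multi-scale induction: it glues the boundary of a single hyperbolic disc of curvature $-K^2$ onto a ternary tree of height $h$ (Frankel--Katz style), so that almost all area concentrates near the tree, non-adjacent edges are at distance $\ge 3/4$, and every ``crossing'' of a cycle costs a uniform constant. After straightening the cycle to geodesic arcs (controlled by the hyperbolic isoperimetric inequality) and absorbing short arcs into the tree, the enclosed area becomes a signed sum of subtree areas $N(h_i)|M|/N(h)$, and Lemma~\ref{sum} forces about $h$ terms, hence length $\gtrsim h$. If you want to rescue your recursive scheme, the missing ingredient is precisely a uniform per-crossing cost together with a quantitative non-approximation lemma of this type; as it stands, the proposal is a plausible program rather than a proof.
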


The flat pseudo-norm of a $k-$chain $c$ is defined as the infimum of $|c - \partial A|+|A| $ 
over all Lipschitz $(k+1)-$chains $A$. We identify two chains if
their difference has flat pseudo-norm equal to 0. The completion of this space
is called the space of integral flat $k-$chains and will be denoted by $Z_k(M, \Bbbk)$. 
For details of construction of $Z_k(M, \Bbbk)$ and its properties see a paper of W. Fleming
\cite{Fl}.

The topology of the space of cycles has been studied by F. Almgren in the 60s.
In his PhD thesis he established an isomorphism between the
$m-$th homotopy group of the space of flat $k-$cycles $\pi_m(Z_k(M, \Bbbk))$
and $(m+k)-$th homology group $H_{m+k}(M; \Bbbk)$ of $M$ \cite{Al}. 
A family of cycles that corresponds to a non-trivial element of homology
of $M$ is called a sweep-out of $M$ and the volume of the largest cycle in the 
sweep-out is called the width of the sweep-out.
The $k-width$ of $M$, $W_k(M)$, is defined to be the infimum over 
widths of all sweep-outs of $M$ by $k-$cycles.
When $k=1$, $W_1(M)$ is sometimes called a $diastole$ of $M$ (\cite{BS}).
$W_k(M)$ is an important geometric invariant of a Riemannian manifold.
Almgren proved that every manifold admits a stationary $k-$cycle of
volume $\leq W_k(M)$ and J. Pitts obtained regularity results for this
cycle (see \cite{P}).

Various inequalities relating $W_k(M)$ to other geometric invariants of $M$
exist in the literature. L. Guth in \cite{G} proved that for every bounded
open subset $U$ of $\mathbb{R}^n$ $W_k(U)\leq C(n) |U|^{\frac{k}{n}}$, where 
$C(n)$ depends on the dimension $n$ only.

F. Balacheff and S. Sabourau proved in \cite{BS} that for any Riemannian
surface of genus $g$, $W_1(M) \leq  C \sqrt{g+1} \sqrt{|M|} $ for a
universal constant $C$.

The author learnt from A. Nabutovsky of the following question asked by S. Sabourau
in a private conversation. Is it possible to construct metrics on 
a Riemannian 2-sphere of bounded diameter and
 arbitrarily large 1-width? Previously it has been 
 shown in \cite{L} that the width of an ``optimal" sweep-out of a 2-sphere by closed curves,
 rather than cycles, can not be controlled in terms of diameter.
 We generalize the result in \cite{L} and answer the question of Sabourau positively.

\begin{theorem} \label{width}
For any smooth closed 2-surface $M$ and any $C>0$ there exists a 
Riemannian metric on $M$ such that $diam(M) = 1$ and
$W_1(M) \geq C$.
\end{theorem}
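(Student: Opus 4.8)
The plan is to deduce Theorem~\ref{width} from Theorem~\ref{main} together with Almgren's isomorphism; the point is that any sweep-out of $M$ by $1$-cycles must pass through a cycle whose minimal filling has area exactly $\tfrac12 Area(M)$, after which Theorem~\ref{main} applies directly. Given $C>0$, let $g$ be the metric furnished by Theorem~\ref{main} for this $C$, so that $diam(M,g)=1$ and every Lipschitz $1$-cycle $z$ with $Fill(z)=\tfrac12 Area(M)$ satisfies $|z|\geq C$. It then suffices to show that every sweep-out of $(M,g)$ by $1$-cycles contains a cycle $z$ with $Fill(z)=\tfrac12 Area(M)$: such a $z$ has length at least $C$, so the width of the sweep-out is at least $C$, and taking the infimum over sweep-outs gives $W_1(M,g)\geq C$. (A sweep-out exists because $H_2(M;\Bbbk)\neq 0$.)

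Fix such a sweep-out: a flat-continuous loop $\{z_t\}_{t\in[0,1]}$ of integral flat $1$-cycles with $z_0=z_1=0$ representing a nonzero class under Almgren's isomorphism $\pi_1(Z_1(M;\Bbbk))\cong H_2(M;\Bbbk)$. This loop lies in the connected component of $0$, so each $z_t$ is a $2$-boundary, and by Almgren's theory it lifts to a flat-continuous family of integral flat $2$-chains $\{A_t\}$ with $\partial A_t=z_t$ and $A_0=0$. Since $z_1=0$, the endpoint $A_1$ is an integral flat $2$-cycle on $M$, so $A_1=n[M]$ for some $n\in\Bbbk$, $[M]$ the fundamental class, by the constancy theorem, and $n\neq 0$ is exactly the statement that the sweep-out is nontrivial. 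Finally, because $M$ is a surface the only integral flat $3$-chain on $M$ is $0$, so the flat norm of a $2$-chain equals its mass; in particular $|A_t-A_s|\to 0$ as $s\to t$.

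Any finite-mass $2$-chain with boundary $z_t$ differs from $A_t$ by an integral flat $2$-cycle, hence equals $A_t+m[M]$ for some $m\in\Bbbk$, so
\[
F(t):=Fill(z_t)=\min_{m\in\Bbbk}\bigl|A_t+m[M]\bigr| .
\]
Because $\bigl|\,|A_t+m[M]|-|A_s+m[M]|\,\bigr|\leq |A_t-A_s|$ for every $m$, the function $F$ is continuous, and $F(0)=0$. I claim $\max_t F(t)\geq\tfrac12 Area(M)$. If not, then by compactness $F(t)\leq\tfrac12 Area(M)-2\delta$ for all $t$, with $\delta>0$; pick an optimal $m(t)$ for each $t$, so that $|A_t+m(t)[M]|=F(t)$. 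For $s$ close enough to $t$ we have $|A_t+m(s)[M]|\leq F(s)+|A_t-A_s|<\tfrac12 Area(M)-\delta$, whereas if $m(s)\neq m(t)$ then $\bigl|(m(s)-m(t))[M]\bigr|\geq Area(M)$, so the reverse triangle inequality gives $|A_t+m(s)[M]|\geq Area(M)-F(t)>\tfrac12 Area(M)-\delta$, a contradiction; hence $m(\cdot)$ is locally constant, so constant. But $A_0=0$ forces $m(0)=0$ and $A_1=n[M]$ forces $m(1)=-n\neq 0$, a contradiction. Therefore $\max_t F(t)\geq\tfrac12 Area(M)$, and since $F$ is continuous with $F(0)=0$, the intermediate value theorem produces $\hat t$ with $Fill(z_{\hat t})=F(\hat t)=\tfrac12 Area(M)$. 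By Theorem~\ref{main}, $|z_{\hat t}|\geq C$, which is what we wanted.

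The steps needing the most care are the inputs drawn from Almgren's thesis: that an arbitrary sweep-out---a priori only flat-continuous and possibly very far from width-minimizing---lifts to a flat-continuous family of $2$-chains with $A_0=0$ and $A_1=n[M]$, $n\neq 0$; and the basic structure of the space of integral flat $2$-chains on a surface, in particular the coincidence of flat norm and mass in top dimension, which is what makes $Fill$ vary continuously along the sweep-out. With these granted the remaining argument is elementary, and all of the geometric content has been pushed into Theorem~\ref{main}.
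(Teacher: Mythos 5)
Your proposal is correct and follows the same overall strategy as the paper: deduce Theorem~\ref{width} from Theorem~\ref{main} by showing that any sweep-out must contain a cycle whose filling is exactly $\tfrac12|M|$, with the essential inputs being Almgren's isomorphism and the fact that the only integral flat $2$-cycles on a closed surface are multiples of the fundamental class. The execution of that intermediate claim differs. The paper argues by contradiction over a fine discrete subdivision: assuming $Fill\leq\tfrac12|M|-\epsilon$ along the loop, it fills consecutive differences $z(t_{i+1})-z(t_i)$ by small chains to build a $2$-cycle $A$ that Almgren's theorem makes nontrivial, then telescopes the fillings $B_i$ (each difference $B_i+A_i-B_{i+1}$ having mass $<|M|$, hence trivial) to contract $A$. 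You instead lift the loop to a family $A_t$ of $2$-chains, write $Fill(z_t)=\min_{m}|A_t+m[M]|$, show the optimal multiple $m(t)$ is locally constant while $Fill$ stays below $\tfrac12|M|$, and conclude with the intermediate value theorem; this has the advantage of actually producing a cycle with $Fill$ \emph{exactly} $\tfrac12|M|$, a point the paper asserts but whose contradiction argument only directly treats the ``uniformly below $\tfrac12|M|$'' case, implicitly using the continuity of $Fill$ that you make explicit. The one soft spot is the appeal to a genuinely flat-continuous lift ``by Almgren's theory'': what the proof of Almgren's isomorphism directly provides is the discrete construction the paper uses. But your argument only needs $|A_t-A_s|$ small for nearby parameters, and this is available either from that discrete construction over a fine subdivision or by filling the small flat difference $z_t-z_s$ isoperimetrically, so this is a presentational rather than substantive gap.
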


The construction relies on the idea of S. Frankel 
and M. Katz in \cite{FK} of defining metrics using embeddings of trees.

\begin{figure} 
   \centering	
	\includegraphics[scale=1]{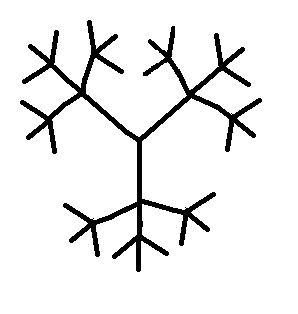}
	\caption{Ternary tree} \label{ternary_fig}
\end{figure}

We embed a ternary tree $T$ (see Figure \ref{ternary_fig}) into the manifold and define a metric on $M$, such that

\begin{enumerate}
\item
the diameter of $M$ is $\leq 1$;

\item
the area of $M$ is mostly concentrated in a small neighbourhood of $T$;

\item
the distance between non-adjacent edges of $T$ is bounded from below by some positive constant.
\end{enumerate}

Then one observes that to subdivide a ternary tree into two equal parts by a collection of closed curves,
one either has to have one closed curve that intersects many edges or many short 
closed curves or some combination of both.

One concludes that the total length of a 1-cycle dividing $M$ into two parts 
of equal area must be large.

We first prove the result for a Riemannian metric on the 2-sphere $M=(S^2,g)$ and then
extend it to general surfaces.

\medskip\noindent
{\bf Acknowledgements.}
The author would like to thank Alexander Nabutovsky, Regina Rotman and Florent Balacheff 
for valuable discussions of this paper. The author is thankful to the anonymous referee
for many suggestions that helped to improve the exposition.

The author was supported by Natural Sciences and Engineering Research
Council (NSERC) CGS scholarship.

\section{Upper bound on the length of a subdividing curve}

In this section we prove Theorem \ref{subdivision}.

Let $n$ denote the number of generators 
in a minimal basis for homology group $H_1(M;\Bbbk)$.

Let $a \in M$ be any point. There exists a collection $\Gamma = \{\gamma_1,...,
\gamma_n \}$
of simple loops based at $a$ 
 that form a basis for homology $H_1(M, \Bbbk)$. Furthermore, they can be chosen so that
 they do not pairwise intersect except
at the point $a$, and $|\gamma_i| \leq 2d$. Indeed, following an argument of M.Gromov in 
\cite{Gro}
 we can construct them inductively as follows. Let $\gamma_1$ be the shortest homologically
  non-trivial loop based at $a$ and define $\gamma_k$ to be the shortest loop based at $a$ 
  that is homologically independent from $\{ \gamma_1,...,\gamma_{k-1} \}$. 
Choose a constant speed parametrization of $\gamma_k$. Then two arcs of $\gamma_k$
from $a$ to the point $b=\gamma_k(\frac{1}{2})$ are minimizing geodesics.
Moreover, these are the only minimizing geodesics from $a$ to $b$.
 For suppose
there is a different path $\alpha$ from $a$ to $b$ of length $\leq \frac{1}{2} |\gamma_k|$.
Set $\beta_1 = \alpha + \gamma_k|_{[\frac{1}{2},1]}$ 
and $\beta_2 = \gamma_k|_{[0,\frac{1}{2}]}- \alpha$.
We observe that by smoothing out the corner of $\beta_i$
at the point $b$ we can make it strictly shorter than $\gamma_k$.
It follows that $\beta_1$ and $\beta_2$ are generated by 
$\{ \gamma_1,...,\gamma_{k-1} \}$ and hence so is $\gamma_k$. 
This is a contradiction. 
 
Since each loop is length minimizing until the midpoint 
and there are no other length minimizing geodesics from $a$ to the midpoint,
it follows that two loops do not intersect except at $a$.

We cut surface $M$ along the curves of $\Gamma$ to obtain a manifold with boundary $D$.
Since curves of $\Gamma$ are homologically independent, successively cutting along them does not separate the surface. Hence, $D$ is connected. We claim that it is homeomorphic to a disc.
Let $U$ be a small tubular neighbourhood of the union of curves in $\Gamma$.
Consider Mayer-Vietoris sequence for the decomposition $M=U \cup D$.
Map $H_1(U \cap D; \Bbbk) \rightarrow H_1(U; \Bbbk) \oplus H_1(D; \Bbbk)$
sends the generator of homology of $U \cap D \simeq S^1$ to $0$.
Map $H_1(U; \Bbbk) \oplus H_1(D; \Bbbk) \rightarrow H_1(M; \Bbbk)$ is an isomorphism
when restricted to $H_1(U; \Bbbk)$. Hence, $ H_1(D; \Bbbk) = 0$ and it follows
that $D$ is a disc. 
The length of the boundary of $D$ satisfies $|\partial D| \leq 4 n d$.

By Theorem 6.1 B in \cite{LNR} for every $\delta > 0$ there exists a curve $\beta$ of
length $\leq 2 \sup _{x \in D} dist(x, \partial D) + \delta \leq 2 d + \delta$
 with endpoints on the boundary of $\partial D$, which does
not self-intersect and divides $D$ into subdiscs $D_1$ and $D_2$ satisfying
$\frac{1}{3}Area(D) - \delta^2 \leq Area(D_i ) \leq \frac{2}{3} Area(D) + \delta^2$.
We keep applying the theorem to subdivide each subdisc into smaller subdiscs.
After $ N= \lfloor \log_{\frac{2}{3}+o(\epsilon)} (2 \epsilon) \rfloor +1$ iterations we subdivide 
$D$ into $2^N =  (2 \epsilon) ^{-\frac{1}{\log_2(3/2)}} + o(\delta)$
subdiscs of area less than $2 \epsilon A(D)$. By choosing $\delta$ sufficiently small
we can make $2^N < \epsilon ^{-\frac{1}{\log_2(3/2)}}$.

It is well-known that every cell subdivision of a $2$-disc into a finite
collection of cells is shellable, that is, we can enumerate these $N$ discs
by integers $1,..., N$ so that for every positive integer $k<N$ the union
of the first $k$ discs is homeomorphic to a disc. 
Hence, we can start adjoining discs one by one until we obtain a subdisc of area
within $\epsilon$ of $\frac{1}{2} Area(M)$. Then the length of the boundary 
of this disc 
will be bounded by $(2^N+4n) d + o(\delta) < (\epsilon ^{-\frac{1}{\log_2(3/2)}} +8 g)d $
for $\delta$ sufficiently small.

\section{Spheres that are hard to subdivide}

We prove Theorem \ref{main} in the case when $M=(S^2,g)$.

\textbf{1. Embedding of a ternary tree in $M$}

We use a construction similar to that of S. Frankel and M. Katz \cite{FK}.

The difference with construction in \cite{FK} is that we use a ternary tree instead of a binary tree.
Define a ternary tree of height $n$ inductively as follows.
Ternary tree of height $0$ is a point. A ternary tree of height $n$ is obtained by 
taking a ternary tree of height $n-1$ and attaching 3 new edges to each vertex that 
has less than 3 edges. Let the function $N(h)= \frac{3}{2}(3^h - 1)$ 
denote the number of edges in a ternary tree of height $h$.

Given a ternary tree $T$ of height $h$ we will define an embedding $\iota: T \rightarrow S^2$
and a metric $g$ on $S^2$, such that diameter of $(S^2,g)$ is $\leq 1$ and the distance between 
images of non-adjacent edges is $\geq 3/4$. We do it in the following way.

Consider hyperbolic plane $\mathbb{H}^2$ of constant negative curvature $-K^2$.
Let $D$ be a closed disc of radius $1/2$ in $\mathbb{H}^2$.
Fix a large constant $h \in \mathbb{N}$.
Subdivide the boundary of $D$ into $2N(h)$ equal intervals. 
We require that $K$ is large enough so that the following holds:

\begin{enumerate}
\item
the distance between the endpoints of an interval, denoted by $E$, is $\geq 3/4$;

\item
all but $\frac{1}{N(h)}$ of area of $D$ is contained within $\frac{1}{4}$-neighbourhood of the boundary.
\end{enumerate}

We define a quotient map $q$ on $\partial D$ that identifies pairs of intervals
in such a way that $q(\partial D)$ is a ternary tree $T$ with $N(h)$ edges.
In other words, we consider a closed tour of the tree so that each edge
is traversed twice, and regard this tour as a map $q:S^1 = \partial D \rightarrow T$.
We then extend $q$ to a map of $D$ to a sphere $M$ so that it is an isometry on the interior
of $D$.
We can think of it as ``gluing" the boundary of $D$ onto a ternary tree $T$
to obtain a sphere $M$ with singular metric on $T$ and hyperbolic metric 
on the open set $M \setminus T$. 

\begin{remark}
Alternatively, as was suggested to the author by F. Balacheff,
one could consider a $2N(h)-$gon made out of $2N(h)$ flat equilateral triangles with a
common vertex. Edges of the $2N(h)-$gon are pairwise identified to obtain 
a ternary tree in the same way we did it above for the hyperbolic disc. 
We obtain a singular metric on the sphere satisfying properties (1)-(3)
from the introduction. If we glue triangles adjacent to the same 
edge of the ternary tree so that they make two sides of ``one triangle", we can visualize 
this example as in Figure \ref{cone}.

The difference between these two constructions is that negative curvature
in the first case is uniformly distributed over
the complement of tree $T$ and in the second case it is concentrated at a point
(the central vertex). To prove Theorem \ref{main} we will use the first construction
with hyperbolic disc. The argument can be adopted to prove the same result
for the sphere on Figure \ref{cone}.
\end{remark}

\begin{figure} 
   \centering	
	\includegraphics[scale=0.3]{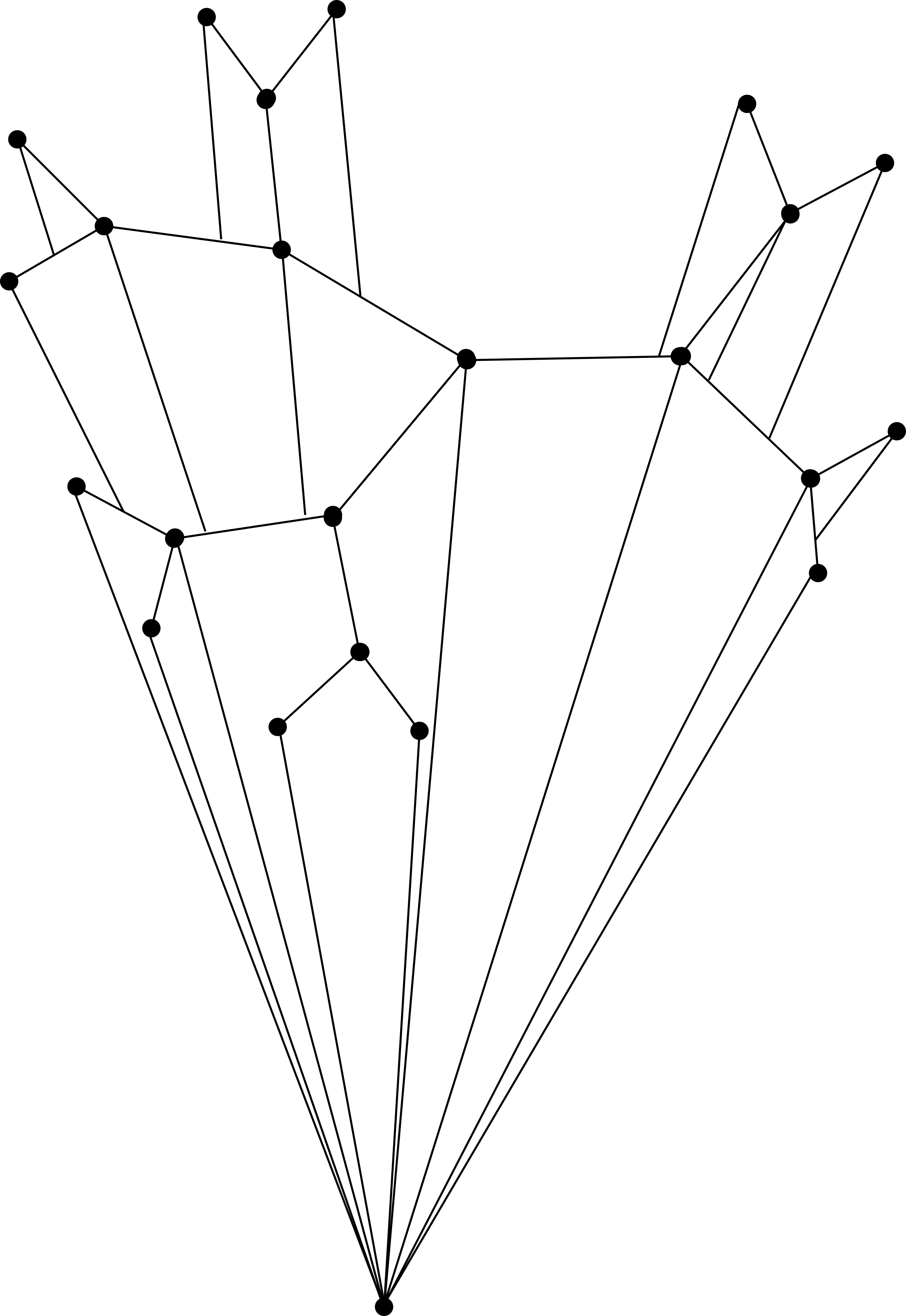}
	\caption{Sphere that is hard to subdivide. Figure courtesy of F. Balacheff.} \label{cone}
\end{figure}

The main fact about hyperbolic geometry that we will be using
 is the isoperimetric inequality in $\mathbb{H}^2$.

\begin{theorem} \label{isoperimetric}
Let $\mathbb{H} ^2$ be the 2-dimensional hyperbolic space of constant curvature $-K^2$.
Then for every integral flat $1-$cycle $c$ we have $|c| \geq K Fill(c)$
\end{theorem}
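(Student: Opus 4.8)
The plan is to reduce the statement for integral flat $1$-cycles to the classical isoperimetric inequality in $\mathbb{H}^2$, where it is most naturally stated for smooth curves bounding smooth domains, and then to promote it to the flat-chain category by an approximation argument. First I would record the classical fact: in the hyperbolic plane of constant curvature $-K^2$, a smooth (or rectifiable Jordan) curve of length $L$ bounding a region of area $A$ satisfies $L^2 \geq K^2 A^2 + 4\pi A$, and in particular $L \geq K A$. This is the sharp isoperimetric inequality in $\mathbb{H}^2$; dropping the positive $4\pi A$ term gives exactly the linear bound $|c| \geq K\, Fill(c)$ we want, and this linear form is precisely what survives when we pass from a single Jordan curve to a general cycle.

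Next I would handle a general Lipschitz $1$-cycle $c$. By definition $Fill(c) = \inf\{|A| : \partial A = c\}$ over Lipschitz $2$-chains $A$, so it suffices to show that for every Lipschitz $2$-chain $A$ with $\partial A = c$ one has $|c| \leq$ — wait, the inequality we need is in the other direction: we must show $|c| \geq K\, Fill(c)$, i.e. that $Fill(c) \leq |c|/K$, so it is enough to exhibit \emph{some} filling $A$ of $c$ with $|A| \leq |c|/K$. Here I would invoke the coarea/slicing structure of flat chains in $\mathbb{H}^2$: since $H_1(\mathbb{H}^2;\Bbbk) = 0$, every flat $1$-cycle bounds, and the isoperimetric inequality for flat chains in a space with a linear isoperimetric inequality for smooth domains follows by the standard decomposition of an integral $1$-cycle into (countably many) closed curves together with the monotonicity of the isoperimetric profile. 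Concretely, approximate $c$ in the flat norm by a sum of rectifiable Jordan curves $c_j$ with $\sum_j |c_j| \leq |c| + \varepsilon$; fill each $c_j$ by the hyperbolic region it bounds, of area $|A_j| \leq |c_j|/K$ by the classical inequality; then $A = \sum_j A_j$ fills (an approximation of) $c$ with total area $\leq (|c|+\varepsilon)/K$. Letting $\varepsilon \to 0$ and using lower semicontinuity of mass / continuity of $Fill$ under flat convergence gives $Fill(c) \leq |c|/K$.

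The main obstacle is the bookkeeping in the approximation step: one must justify decomposing an arbitrary integral (or $\mathbb{Z}_2$) flat $1$-cycle in $\mathbb{H}^2$ into curves whose lengths sum to (almost) the mass of $c$, and control the flat-norm error of the resulting filling. This is exactly the kind of structure theorem for $1$-dimensional flat chains that appears in Fleming's paper \cite{Fl} (cited in the excerpt for the construction of $Z_k(M,\Bbbk)$), so I would lean on that reference rather than reprove it. An alternative, cleaner route that avoids the decomposition entirely: use the fact that $Fill$ and $|\cdot|$ are both continuous (resp. lower semicontinuous) on $Z_1$, prove the inequality first for smooth cycles by the classical argument applied componentwise, and extend by density of smooth cycles in the flat topology. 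Either way, the geometric content is entirely in the sharp hyperbolic isoperimetric inequality $L \geq KA$ for domains, and everything else is soft functional-analytic closure.
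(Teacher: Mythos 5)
The paper does not actually prove Theorem \ref{isoperimetric}: it simply cites Burago--Zalgaller \cite{BZ} and remarks that the inequality holds with $\mathbb{Z}$ or $\mathbb{Z}_2$ coefficients, so any honest argument you give is ``a different route'' by default. Your reduction is the standard one and is essentially correct in outline: the geometric content is the sharp inequality $L^2 \geq 4\pi A + K^2 A^2$ for Jordan domains, which yields the linear bound $L \geq KA$, and the rest is promoting this to flat cycles. Two caveats on the promotion step. First, the decomposition theorem you need is not really in Fleming \cite{Fl}; the right tool is the decomposition of an integral current into indecomposable components (Federer \cite{Fe}, 4.2.25), whose $1$-dimensional indecomposable cycles are closed Lipschitz curves --- closed, but not necessarily \emph{simple}, so you must either further split a closed curve into simple loops (e.g.\ after a small perturbation making self-intersections transverse, which changes length and filling area by an arbitrarily small amount) or invoke the version of the hyperbolic isoperimetric inequality valid for arbitrary closed curves with area counted as filling area; for $\mathbb{Z}_2$ coefficients a separate word is needed since Federer's statement is for integral currents. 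Second, your ``cleaner'' alternative --- extend from smooth cycles by density using continuity of $Fill$ in the flat norm --- is close to circular: the continuity of the filling volume under flat convergence of cycles is itself usually extracted from an isoperimetric (or cone-type) inequality, so it should not be taken as a free black box. With the first route tightened as above, your proposal is a legitimate proof of the statement the paper outsources to \cite{BZ}; the paper's choice buys brevity, yours buys self-containedness at the cost of the measure-theoretic bookkeeping you correctly identified as the main obstacle.
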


For the proof see \cite{BZ}. Isoperimetric inequality holds for cycles with coefficients in $\mathbb{Z}$ or $\mathbb{Z}_2$.

\textbf{2. Plan of the proof}

Let $c$ be a cycle whose minimal filling $A$ has area close to half 
of the volume of $M$. We will prove that if height $h$ of the embedded
ternary tree is large enough then $|c| \geq \frac{h}{3}$.

We will successively approximate cycle $c$ by $1-$cycles with certain special 
properties that will allow us to relate its length and the area enclosed by it to
the way it intersects the ternary tree. 
In the process of approximation the length and filling area of the
cycle will change, but in a controlled way.

In section 3 we will replace all arcs of $c \cap M \setminus T$ with geodesic
arcs in the hyperbolic disc $M\setminus T$. The 2-chain filling 
this new cycle we will denote by $A_1$. From the isoperimetric inequality 
in hyperbolic plane it will follow that the difference $|Fill(c) - |A_1||$
can be bounded it terms of length of the cycle $c$ (see (\ref{A_1})).

Our second approximation (section 4) will be to erase all arcs of $\partial A_1$
that have length $<E$ (distance between two adjacent vertices of the tree).
Again the area will change by an amount controlled by $|c|$ as in (\ref{A_2}).
The resulting 2-chain we call $A_2$.

In section 5 we show that the area enclosed by a cycle is 
proportional (up to a small error) to the number of edges of tree $T$ enclosed by it.

In section 6 we give a combinatorial argument based on simple Lemma \ref{sum}
showing that if $|\frac{|M|}{2}-|A_2||$ is small, then the sum of total number
of whole edges of $T$ in $ \partial A_2 \cap T$ and arcs of  $\partial A_2 \setminus T$
must be large. By inequality (\ref{A1A2}) this implies that the length of $\partial A_1$
is large. Combining this with inequalities (\ref{A_1}) and (\ref{A_2}) we
conclude that the length of $c$ must be large.

\textbf{3. Approximating $|c|$ by images of geodesic arcs}

Let $A$ be a Lipschitz chain in $M$ filling $c$
with $||A|- \frac{1}{2} |M|| < \epsilon$ for some small positive $\epsilon< \frac{|M|}{N}$.
It is a well-known result in geometric measure theory that 
an integral Lipschitz chain can be approximated by a chain $A_0=\sum a_i f_i$
where each $f_i$ is smooth and $|A|-|A_0|$ and $|\partial A| - |\partial A_0|$
are arbitrarily small (see \cite{Fe}, 4.2.20). 
We make another small perturbation to 
ensure that $\partial A_0$ intersects the tree $T$ transversally and does not intersect
vertices of $T$. Further we may assume that $\partial A_0 = \sum c_i$, where each $c_i$
is either a closed curve in $M \setminus T$ or an arc with endpoints on $T$ whose interior 
avoids the tree $T$.

\begin{figure} 
   \centering	
	\includegraphics[scale=0.6]{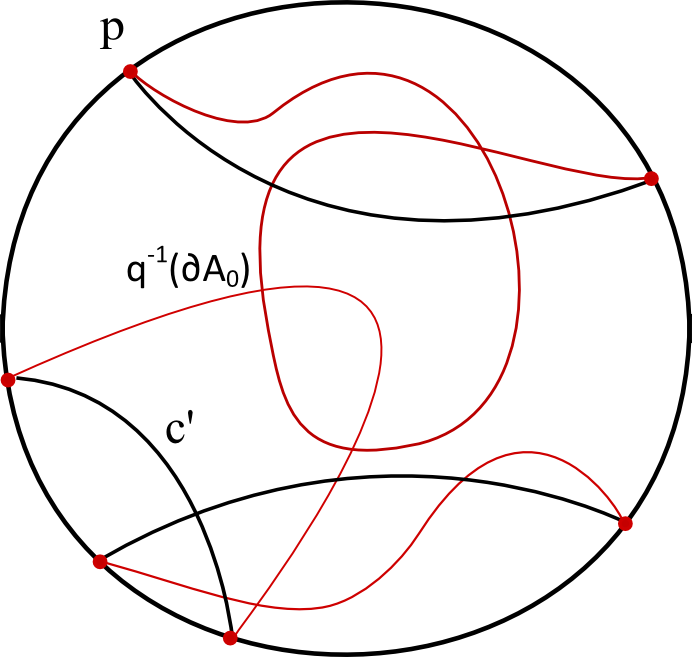}
	\caption{Constructing new cycle $\partial A_1$} \label{geodesic_arcs}
\end{figure}

We construct a new 2-chain $A_1$ as follows.
For each cycle $c_i \in \partial A_0$ that avoids the tree $T$ we can fill
it by a 2-cycle of area $\leq \frac{2}{K} |c_i|$.

Let $c_i$ be an arc with endpoints on $T$. We replace it with a geodesic arc in $M \setminus T$
in the following way.
Observe that the pre-image $q^{-1}(\partial A_0)$ is a $1-$chain in 
$D \subset \mathbb{H}^2$ with $p= \partial q^{-1}(\partial A_0)$ a $0-$cycle
in $\partial D$ (see Figure \ref{geodesic_arcs}). 
We fill $p$ in $D$ by a minimizing $1-$cycle $c'$ which consists of 
geodesic arcs in $D$. Then $\partial c' - q^{-1}(\partial A_0)$ is a cycle of 
length $\leq 2 |\partial A_0|$ (since $c'$ is minimizing).
Hence, we can fill it by a 2-chain $B$ of area $\leq \frac{4}{K} |\partial A_0|$.
Define $A_1=q(q^{-1}(A_0)+B)$. Then $\partial A_1$ is a union of arcs whose pre-images
under the quotient map are geodesics with endpoints on $\partial D$.

We have the following relationship between $A_1$ and the original cycle $c$:

\begin{eqnarray} \label{A_1}
|c| \geq |\partial A_1|, \nonumber \\ 
|Fill(c) - A_1| \leq \frac{4}{K} |c|
\end{eqnarray}

\textbf{4. Replacing short arcs by arcs in $T$}

Recall that by construction $M \setminus T$ is isometric
to an open disc of radius $\frac{1}{2}$ in the hyperbolic plane.
Each arc of $\partial A_1 \setminus T$ separates $M \setminus T$ into two subdiscs.
Let $D'$ denote the smaller of subdics.
Suppose the length of an arc $l$ is smaller than $E$, the distance between neighbouring vertices of $T$. 
We will call such an arc a short arc. Then 
we show that the area of $D'$ satisfies the following inequality.

\begin{equation} \label{short arcs}
|D'| \leq \frac{|M|}{N(h)} |l|
\end{equation}

\begin{figure} 
   \centering	
   \includegraphics[scale=0.6]{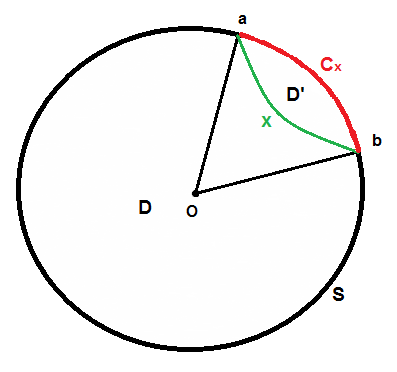}
   \caption{Short arc of length $x$ on a hyperbolic disc $D$} \label{shortarcf}
\end{figure}

Indeed, let $a$ and $b$ be two points on unit circle $S$ in hyperbolic space, let $x=dist_{\mathbb{H}^2}(a,b)$ and $C_x$
denote the length of the shorter arc of $S$ from $a$ to $b$
(see Figure \ref{shortarcf}).
Using hyperbolic trigonometry we observe that $\frac{x}{C_x}$ is a decreasing function of $x$.
Let $O$ denote the center of the hyperbolic disc. 
We observe that the area of a sector of the disc between $Oa$ and $Ob$
is proportional to $C_x$. If the length of an arc is equal to $C_E$ then the area of 
the corresponding sector is $\frac{|M|}{2 N(h)}$.
Therefore the area of the 
sector of $D$ containing $q^{-1}(D')$ is 
$\frac{C_{|l|}}{C_E} \frac{|M|}{2N(h)} \leq \frac{|l|}{E} \frac{|M|}{2N(h)}$.
Since $E\geq \frac{3}{4}$ we obtain that $|D'| \leq \frac{4}{3} \frac{|M|}{2N(h)} |l|$.

We replace each short arc $c_i$ of $\partial A_1$ by the shortest path in 
$T$ with the same endpoints and form this way a new 2-chain $A_2$.

The boundary $\partial A_2$ then consists of long arcs of $\partial A_1$
connected by paths in $T$. We contract each path in $T$ to the shortest path
with the same endpoints. Clearly, this does not change the area of $A_2$.

Let $L_s$ denote the union of all short arcs in $\partial A_1$.
By (\ref{short arcs}) we obtain that the area of $A_2$ satisfies

\begin{equation} \label{A_2}
|A_1| -  \frac{|M|}{N(h)} |L_s| \leq |A_2| \leq |A_1| +  \frac{|M|}{N(h)} |L_s|
\end{equation}

We will also need an estimate relating the number of whole edges in $\partial A_2 \cap T$
to the length of $L_s$. Let $l$ be a path in $\partial A_1$ with endpoints in $T$ consisting entirely of short 
arcs. Suppose the shortest path in $T$ connecting the endpoints of $l$ contains $n$ whole edges.
As noted above the ratio $\frac{x}{C_x}$ of the length $x$ of a geodesic path between two points 
on a circle in hyperbolic plane to the length $C_x$ of the circular arc that connects them is
a decreasing function of $x$. From this fact it is not hard to show that $|l| \geq n E$.

Let $m_l$ be the number of arcs in $\partial A_2 \setminus T$.
These are arcs of length $\geq E \geq \frac{3}{4}$ that lie in $\partial A_1 \cap \partial A_2$.
Let $m_s$ be the number of whole edges in $\partial A_2  \cap T$.
We obtain that the length of $\partial A_1$ satisfies 

\begin{equation} \label{A1A2}
|\partial A_1| \geq \frac{3}{4} (m_l + m_s)
\end{equation}


\textbf{5. Area and the number of edges enclosed by a cycle}

In the estimates below we do not want to keep track of the 
exact value of small terms that will be shown negligible in the end.
We will use the following notation. 
We will write $O(x)$ to denote a quantity that is between $-Cx$ and $Cx$
for some constant $C$ independent of $N$.

Suppose $l$ is a long arc, i.e. $|l| > E$. Let $e_1$ and $e_2$ be edges of $\partial D \subset \mathbb{H}^2$ connected by $q^{-1}(l)$
and $D'$ be one of the two discs in $D \setminus q^{-1}(l)$. Let $N'$ denote the number of whole
edges of $\partial D $ whose interiors are contained in
$\partial D'$. Recall that the distance $E'$ between two non-adjacent edges $e_1$ and $e_2$ satisfies $\frac{3}{4} < E \leq E' \leq 1$.
Geodesic $l$ does not enter $\frac{1}{4}-$neighbourhood of any edge other than $e_1$, $e_2$
and possibly one of the neighbours of each $e_1$ and $e_2$. 
Indeed, if there is a path $\alpha$ from a point on $l$
to an edge non-adjacent to $e_1$ or $e_2$ then by triangle inequality $|l|+ 2 |\alpha|> \frac{3}{2}$, so 
as $|l| \leq 1$ we have $\alpha > \frac{1}{4}$.
As all but at most $\frac{|M|}{N(h)} $ of area is concentrated 
in the $1/4-$neighbourhood of the boundary we conclude that the area of $D'$ is given by

\begin{equation} \label{long arc}
|D'|= \frac{|M| N'}{2N(h)} + O(\frac{|M|}{N(h)})
\end{equation}

\textbf{6. Combinatorial estimate}

Let $m = 2( m_l + m_s)$.
By the inequality (\ref{A1A2}) we $|\partial A_1| \geq \frac{3}{8} m$.
We prove that the area of $A_2$
is given by an expression

\begin{equation} \label{D'}
|A_2| = \sum _{i=1} ^{m'} (-1)^{s_i} N(h_i) \frac{|M|}{N(h)} + O(m)\frac{|M|}{N(h)}
\end{equation}

where $s_i$ and $h_i$ are some integers and $m' \leq m$.
Note that $h_i$'s are allowed to coincide. 

Consider a connected component $L$ of $\partial A_2$.
Let $L_l= \sqcup l_i$ denote the union of all arcs of $L \setminus T$
with endpoints $a_1,...,a_{2n} \in T$. 
Each $a_{2i}$ is connected to $a_{2i+1}$ by a minimizing path in $T$.
Let $D'$ be one of the connected components of $M \setminus L$
(see Figure \ref{induction_step}).
Since the area of $A_2$ can be written as a sum 
$\sum \pm |D_i|$ (plus, possibly, the total area $|M|$), where each $D_i$ is a
complement of a connected component of  $\partial A_2$, it is enough to prove equation (\ref{D'}) for the
case when $A_2 = D'$.

\begin{figure} 
   \centering	
	\includegraphics[scale=0.6]{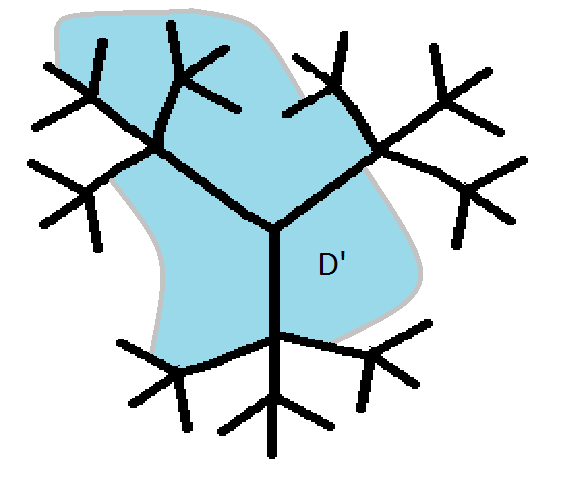}
	\caption{Calculating area of $D'$} \label{induction_step}
\end{figure}

We prove (\ref{D'}) by induction on the number of arcs of $L_l$.
Suppose first that $L_l$ consists of one arc with the same endpoints. 
Then $L_l$ intersects $T$ at one point $a$ dividing $T$ into two subtrees. 
Let $e$ be the edge of $T$ that contains $a$ and let $v$ be the vertex of $e$
that is further away from the root of $T$.
Then $v$ is the root of a ternary subtree $T'$ of height $h'$
that is either entirely inside $D'$ or outside $D'$.
Hence, by (\ref{long arc}), $|D'| = (N(h') + O(1)) \frac{|M|}{N(h)}$
or $|D'| = |M| -(N(h') + O(1)) \frac{|M|}{N(h)} = 
(N(h) - N(h') + O(1)) \frac{|M|}{N(h)} $.

Now suppose that $L_l$ consists of one arc with different endpoints $a_1$ and $a_2$.
As in the previous case we have one ternary subtree that is 
entirely inside or outside of $D'$. 
Let $t$ denote the shortest path in $T$ from $a_1$ to $a_2$ and let $k$
denote the number of whole edges in $t$. For each vertex $v$ in $t$ there 
are at most two edges starting at $v$ that are not in $t$. For each of them
there is a choice of whether they are inside or outside of $D'$.
Hence, we have $2 k$  more summands of the form
$\pm N(h_i) \frac{|M|}{N(h)}$. The area in the neighbourhood
of $t$ introduces a term of $\pm |k| \frac{|M|}{2N(h)} + O(\frac{|M|}{2N(h)}) $. 
We conclude that the area of $D'$ is given by 
$\sum _{i=1} ^{2(k+1)} (-1)^{s_i} N(h_i) \frac{|M|}{N(h)} + O(\frac{(k+n) |M|}{2N(h)})$

Suppose $L$ has $n$ arcs $l_1, ... , l_n$ 
with endpoints $a_1,...,a_{2n}$ and $t_1,...,t_n$ are geodesic paths in $T$
 from $a_{2i}$ to $a_{2i+1}$. Let $k_i$ denote the number of whole 
edges in $t_i$ and $k= \sum k_i$.
We claim that the area of $D'$ is given by
$|D'| = \sum _{i=1} ^{m'} (-1)^{s_i} N(h_i)\frac{|M|}{N(h)} + O( \frac{(k+n)|M|}{N(h)}) $,
where $m' \leq 2(n+k)$.

Suppose this is true for $n-1$ long arcs.

To prove inductive step consider $L$ with $n$ long arcs.
Let $l$ be one of them and let $D_l$ be the smaller disc of
$(M \setminus T) \setminus l$. We apply the base of induction to $D_l$ and inductive
assumption to $D' \setminus D_l$. If we cancel out the terms that come from 
the edges of $T$ that lie in the common boundary of $D_l$ and $D' \setminus D_l$
we obtain the desired expression.

This proves equation (\ref{D'}).

We claim that if the difference $|\sum _{i=1} ^{m'} (-1)^{s_i} N(h_i)\frac{|M|}{N(h)}
- \frac{|M|}{2}|$ is small then $m'$ must be large. This follows from a simple lemma below.

\begin{lemma} \label{sum}
Let $N > m > 0$ and $p \geq 3$ be positive integers. 
Let $\{ h_i \}_{i=0} ^{m-1}$
and $\{ s_i \}_{i=0} ^{m-1}$
be two collections of positive integers.
Then $|\frac{p^{N} -1}{p-1} - \sum _{i=0} ^{m-1} (-1)^{s_i}
 p^{h_i}|
\geq \frac{p^{N-m}-1}{p-1} $.
\end{lemma}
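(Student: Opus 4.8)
The plan is to prove the lemma by induction on $m$. Write $R_k := \frac{p^k-1}{p-1} = 1 + p + \cdots + p^{k-1}$ and $S := \sum_{i=0}^{m-1}(-1)^{s_i}p^{h_i}$, so the goal becomes $|R_N - S| \ge R_{N-m}$. For the base case $m=0$ the sum is empty and $R_N \ge R_N$ holds trivially. For the inductive step I would split on the size of the smallest exponent $\min_i h_i$.

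If some $h_j \le N - m$, I would delete the $j$-th term and apply the inductive hypothesis to the remaining $m-1$ signed powers (with the same $N$, which is allowed since $N > m > m-1$), obtaining $|R_N - (S-(-1)^{s_j}p^{h_j})| \ge R_{N-m+1}$. Then, since $R_{N-m+1} = R_{N-m} + p^{N-m}$ and $p^{h_j} \le p^{N-m}$, the triangle inequality gives
\[
|R_N - S| \;\ge\; R_{N-m+1} - p^{h_j} \;\ge\; R_{N-m+1} - p^{N-m} \;=\; R_{N-m}.
\]

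If instead every $h_i \ge N - m + 1$, then $p^{N-m+1} \mid S$, while $R_N \equiv R_{N-m+1} \pmod{p^{N-m+1}}$ because in $R_N = 1+p+\cdots+p^{N-1}$ the top $m-1$ terms are divisible by $p^{N-m+1}$. Hence $R_N - S = R_{N-m+1} + k\,p^{N-m+1}$ for some integer $k$; since $0 < R_{N-m+1} < p^{N-m+1}$ (the upper bound because $p \ge 3$), treating $k\ge 0$ and $k\le -1$ separately gives $|R_N - S| \ge \min\!\big(R_{N-m+1},\, p^{N-m+1}-R_{N-m+1}\big)$. The first term already exceeds $R_{N-m}$, and the bound $p^{N-m+1}-R_{N-m+1} \ge R_{N-m}$ reduces, on clearing denominators, to $p^{N-m+1}(p-2) \ge p^{N-m}-2$, which is immediate from $p-2\ge 1$.

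I expect no genuine obstacle here: the one place that needs a moment's care is the final elementary estimate in the second case, and the rest is a clean two-case induction once one thinks of $R_N$ as a base-$p$ repunit. The first case is essentially a greedy reduction (peel off a term with small exponent), and the second handles precisely the configurations — all exponents large — where such a reduction is unavailable.
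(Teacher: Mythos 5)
Your proof is correct, and it follows essentially the same route as the paper's: induction on $m$ with a case split on whether the smallest exponent is at most $N-m$ (peel off that term and use the inductive hypothesis together with the triangle inequality) or all exponents exceed $N-m$ (a congruence argument modulo $p^{N-m+1}$). The only cosmetic differences are that you start the induction at the empty sum $m=0$ and argue directly rather than by contradiction; your final estimate $p^{N-m+1}-R_{N-m+1}\geq R_{N-m}$ is also stated more carefully than the corresponding inequality in the paper.
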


\begin{proof}

Fix positive integer $N$. The proof is by induction on m.
Let $m = 1$. For $s_0$ even, $h_0 = N - 1$ we get exactly 
$\frac{p^{N-1}-1}{p-1}$. For $s_0$ odd, $h_0 = N$
we obtain
$$p^N - \frac{p^N - 1}{p-1}=\frac{p^{N-1}(p^2 - 2p) +1}{p-1} > 
\frac{p^{N-1}+1}{p-1}$$
since $p > 1 + \sqrt{2}$.

For all other values of $h_0$ and $s_0$ we get  
$|\frac{p^{N} -1}{p-1} - (-1)^{s_0} p^{h_0}|>\frac{p^{N-1}-1}{p-1}$.

We assume that the statement of the lemma holds for $m - 1$ and we prove it for $m$.

Suppose, by contradiction, that 
$|\frac{p^{N} -1}{p-1} - \sum _{i=1} ^{m-1} (-1)^{s_i}
 p^{h_i} | < \frac{p^{N-m}-1}{p-1}$. Let $h_j$ be
the smallest power in the summation. If $h_j > N - m$
then $\sum _{i=1} ^{m-1} (-1)^{s_i} p^{h_i}$ is divisible by
$p^{N-m+1}$ and $\frac{p^{N} -1}{p-1}=
p^{N-m+1} (p^{m-2}+...+1) + \frac{p^{N-m+1}-1}{p-1}$.
Hence, $|\frac{p^{N} -1}{p-1} - \sum _{i=1} ^{m-1} (-1)^{s_i}
 p^{h_i} |$ is congruent modulo $p^{N-m+1}$ to $\frac{p^{N-m+1}-1}{p-1}$ or 
$p^{N-m+1}-\frac{p^{N-m+1}-1}{p-1}$. 
Since $p > 1 + \sqrt{2}$ we obtain $p^{N-m+1}-\frac{p^{N-m+1}-1}{p-1}
> \frac{p^{N-m+1}+1}{p-1} > \frac{p^{N-m+1}-1}{p-1}$.

If $h_j \leq N - m$ then $ |\frac{p^{N} -1}{p-1} - \sum _{i=1, i \neq j} ^{m-1} (-1)^{s_i}
 p^{h_i} | 
< \frac{p^{N-m}-1}{p-1} + p^{N-m} = \frac{p^{N-m+1}-1}{p-1} $,
which contradicts the inductive assumption.
\end{proof}


The difference between the volume of $A_2$ and half of the total volume is given by

$$|\frac{|M|}{2}-|A_2|| = |\frac{1}{2} N(h) - 
\sum _{i=1} ^{m'} (-1)^{s_i} N(h_i)|\frac{|M|}{N(h)} 
 + O(m) \frac{|M|}{N(h)}$$


Recall that $N(h)=\frac{3}{2}(3^h -1)$ and $m' \leq m \leq \frac{8}{3} |\partial A_1|$ 
(by inequality (\ref{A1A2})). We can rewrite the above expression as

 $$\frac{3}{2} |\frac{3^h - 1}{2}  - \sum _{i=1} ^{m'} (-1)^{s_i} 3^{h_i}| \frac{|M|}{N(h)} 
 + O(|\partial A_1|) \frac{|M|}{N(h)}$$

By Lemma \ref{sum} we obtain
$$|\frac{|M|}{2}-|A_2|| \geq \frac{3^{h-\frac{8}{3}|\partial A_1|} }{2}\frac{|M|}{N(h)} +O(|\partial A_1|) \frac{|M|}{N(h)}$$

Recall that $||A_2| - |A_1|| = O(|c|)\frac{|M|}{N(h)}$ by (\ref{A_2}).
As we are interested in the case when $h$ is large, we can assume $K>>4$, so $|Fill(c) - A_1| \leq \frac{4}{K}|c|<|c|$ 
and $|\partial A_1| \leq |c|$ by (\ref{A_1}). 
Collecting all the estimates together we get
$$\frac{|M|}{N(h)} > |\frac{|M|}{2}- Fill(c)|\geq |\frac{|M|}{2}-|A_2|| - |Fill(c) - |A_2||$$
$$\geq \frac{3^{h-\frac{8}{3}|c|}}{2} \frac{|M|}{N(h)} + O(|c|)\frac{|M|}{N(h)} $$

Cancelling out $\frac{|M|}{N(h)}$ on both sides and rearranging we obtain
$$\frac{8}{3} |c| + \log_3(O(|c|+1)) \geq h$$

Since $h$ can be made arbitrarily large it follows
that $|c|$ can be made arbitrarily large. This finishes the proof of Theorem \ref{main} in the case of sphere.

\section{Surfaces of large width}

The proof of Theorem \ref{main} for an arbitrary surface $M$ proceeds as follows.
We take the sphere constructed in the previous section
and glue in handles in a small disc away from the tree $T$ to get the desired topology.
As the handles are small they do not affect the diameter significantly.
However, since our $1-$cycle may wind around multiple times around the handles
we can not say that their contribution to the area of minimal filling will be small.
Because of this we will glue in handles in such a way that 
it does not change the total area of the surface and use the following lemma.

\begin{lemma}
Let $\Sigma$ be a compact Riemannian 2-surface with boundary and let 
$\Gamma$ denote the set of homologically trivial Lipschitz $1-$cycles
with coefficients in $\mathbb{Z}$ if $\Sigma$ is orientable and $ \mathbb{Z}_2$
otherwise.
Then $\inf \{ \frac{|\gamma|}{\sqrt{Fill(\gamma)}} | \gamma \in \Gamma \} > 0$.
\end{lemma}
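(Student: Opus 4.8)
The plan is to reduce the statement to a local isoperimetric inequality on $\Sigma$ and then invoke compactness. First I would argue that it suffices to bound the ratio $|\gamma|^2 / Fill(\gamma)$ from below uniformly. Since $\Sigma$ is compact there is a radius $r_0 > 0$ such that every metric ball $B(x, r_0)$ in $\Sigma$ (intersected with $\Sigma$, so we allow $x$ near the boundary) is contained in a coordinate chart in which the metric is comparable, up to a fixed multiplicative constant, to the Euclidean metric; in such a chart the classical planar isoperimetric inequality gives a constant $c_0 > 0$ with $|\gamma|^2 \geq c_0 \, Fill(\gamma)$ for every homologically trivial cycle $\gamma$ supported in $B(x, r_0)$, where the filling can be taken inside the slightly larger ball. (Homological triviality in $\Sigma$ is what lets us fill at all; on a small ball it is automatic since small loops bound.) This handles the "small" cycles.

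Second, I would treat cycles that are not supported in a single small ball. If $\gamma$ is homologically trivial and its support is not contained in any ball of radius $r_0/2$, then its length is bounded below by a fixed constant: indeed the support, being connected after grouping into components, or more carefully, there exists a component whose diameter exceeds $r_0/4$, hence $|\gamma| \geq r_0/4$. On the other hand $Fill(\gamma) \leq |\Sigma|$, the total area, which is a fixed finite number. Therefore for such cycles $|\gamma| / \sqrt{Fill(\gamma)} \geq (r_0/4)/\sqrt{|\Sigma|} > 0$. Combining the two cases, set $\delta = \min\{ \sqrt{c_0}, (r_0/4)/\sqrt{|\Sigma|} \} > 0$; then $|\gamma| \geq \delta \sqrt{Fill(\gamma)}$ for every $\gamma \in \Gamma$, which is exactly the claim.

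The point requiring the most care is the first step: making precise that a homologically trivial $1$-cycle of small total length, possibly with several components and possibly traversing arcs multiple times, can be filled by a $2$-chain lying in a small ball with controlled area. Here I would first observe that if $|\gamma|$ is smaller than $r_0$ then every connected component of the support of $\gamma$ has diameter at most $|\gamma| < r_0$, so each component sits in one Euclidean-comparable chart; within that chart one applies the isoperimetric inequality for integral (or $\mathbb{Z}_2$) flat chains in the plane — this is exactly the flat-chain isoperimetric inequality of Federer–Fleming, valid for the coefficient groups $\mathbb{Z}$ and $\mathbb{Z}_2$ we use — to fill that component, and then sums the filling areas over components. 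A mild subtlety is that a single component need not be a cycle on its own; but since the ambient surface restricted to the chart is an open subset of the plane (up to bi-Lipschitz equivalence), one may instead push $\gamma$ forward to the plane, fill it there, and pull back, the bi-Lipschitz constant being absorbed into $c_0$. Once this is set up the estimates are routine, and the compactness of $\Sigma$ enters only through the existence of the uniform radius $r_0$ and the uniform bi-Lipschitz chart constants, which follow from a standard covering argument.
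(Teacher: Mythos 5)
This is essentially the paper's own argument: compactness yields a uniform radius with $(1+O(r^2))$-bi-Lipschitz Euclidean charts, cycles (or their components) of length below that scale are filled inside a chart via the planar isoperimetric inequality, and longer cycles are handled by bounding the filling area by $|\Sigma|$, exactly as in the paper's proof. The one slip --- the claim that a cycle not supported in a single small ball must have a component of diameter exceeding $r_0/4$, which fails for several far-apart tiny components --- is made redundant by your own length-based split (fill each component of a cycle of length $<r_0$ in its chart and sum the areas, using $\sum|\gamma_j|^2\leq(\sum|\gamma_j|)^2$), so the proposal stands at the same level of rigor as the paper, including the shared unproved assertion that the long null-homologous pieces admit fillings of area at most $|\Sigma|$.
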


\begin{proof}
Let $\gamma = \sum a_i f_i$ be a homologically trivial Lipschitz $1-$cycle
in $\Sigma$.
Without any loss of generality we can assume that each $f_i$
can be filled by a $2$-cycle of area $\leq |\Sigma|$.

Since $\Sigma$ is compact there exists a positive number $r>0$, such that
every ball $B_r \subset \Sigma$ of radius $\leq r$ is bilipschitz
homeomorphic to a subset of the positive half-plane $\mathbb{R}^2_+$
with bilipshitz constant $L=1+O(r^2)$.

Hence, if $|f_i| < r$ then it can be filled by a $2-$cycle of area 
$\leq (1+O(r^2)) \frac{|f_i|^2}{ \pi}$.
By choosing $r$ sufficiently small we obtain that for all $|f_i| < r$,
$\frac{|f_i|}{\sqrt{Fill(f_i)}}> \frac{ \sqrt{\pi}}{2}$.

On the other hand, we have $Fill(f_i) \leq |\Sigma|$ so 
for $|f_i|\geq r $ we have $\frac{|f_i|}{\sqrt{Fill(f_i)}} \geq \frac{2r}
{\sqrt{|\Sigma|}}$.

\end{proof}

Let $M_K$ denote the sphere from the previous section, where $K$ is the curvature
of hyperbolic disc $D$ used in the construction.
Let $O \in M_K$ be the point in the centre of the hyperbolic disc $D$ in $M_K$.
Choose a very small ball $U$ with a center at $O$. Since $U$ is very small it is 
$(1+\epsilon)-$bilipschitz diffeomorphic to a flat disc of the same radius.

Consider a subset $V$ of $M$ diffeomorphic to a disc. Define a Riemannian metric on
$V$ so that it is isometric to $M_K \setminus U$. We extend this metric to the
rest of $M$ in the following way. Let $\Sigma$ denote $M \setminus V$ and
$g$ denote its genus.
By considering embeddings of a disc or a Mobius strip in $\mathbb{R}^3$
and attaching handles it is easy to construct
a metric on $\Sigma$, so that

\begin{enumerate}
\item
$Area(\Sigma) = (g+10)$

\item
$\sup dist(x, \partial \Sigma) \leq 10$

\item
$|\partial \Sigma| = 1$
\end{enumerate}

We scale the metric so that $|\Sigma| = \frac{1}{2} |U|$.
Then $|\partial \Sigma|$ is approximately $\frac{|\partial U|}{\sqrt{2 \pi (g+10)}}$.
We attach a cylinder $C$ to $\Sigma$ of area $\frac{1}{2} Area(U)$ and lengths of boundary components
$|\partial \Sigma|$ and $|\partial U|$.
 Hence, we obtain a metric on $\Sigma$ that has
area and length of the boundary exactly the same as $U$.

Observe that up to scaling this construction of metric on $\Sigma$
 is independent of the curvature $K$ used in construction of $M_K$.
In particular, the scale invariant quantity $\alpha = \inf \{ \frac{|\gamma|}{\sqrt{Fill(\gamma)}} \}$,
where $\inf$ is taken over homologically trivial curves in $\Sigma$, does not depend on $K$.

Let $a$ and $b$ denote points on $\partial \Sigma$ and let $l_1(a,b)$ and $l_2(a,b)$ denote
lengths of minimizing geodesics between $a$ and $b$ in $\Sigma$ and $\partial \Sigma$ respectively.
Observe that infimum $\beta$ of the ratio $\frac{l_1}{l_2}$ is positive and scale-invariant.
In particular, it is also independent of $K$.

Now let $c$ be a cycle in $M$ and let $A$ be a nearly area-minimizing 
chain filling $c$ with $|\frac{|M|}{2}- |A||< \frac{|M|}{N}$.
If $c$ does not intersect $\partial \Sigma$ then since $|\Sigma|=|U|$
and $V$ is isometric to $M_K \setminus U$ we obtain that
$|c| \rightarrow \infty $ as $K \rightarrow \infty$.

Suppose $c \cap \Sigma$ is non-empty. If $c \subset \Sigma$
then $|A| \leq \alpha |c|^2 +\epsilon$ so $|c|$ has to be very large.

Suppose $c$ intersects $\partial \Sigma$ . We perturb $c$ so that it intersects 
$\partial \Sigma$ transversally. Let $a$ be an arc of $c \cap \Sigma$
and let $v_1,v_2 \in \partial \Sigma$ be endpoints of $a$. Let $l$
be the shorter arc of $\partial \Sigma$ from $v_1$ to $v_2$.

We can fill $a-l$ by a $2-$chain $B$ in $\Sigma$ of area
$|B| \leq (\frac{|a|+|l|}{\alpha})^2$ Adding $B$ to $A$ 
erases arc $a$ and substitutes it with $l$.
This changes length of $c$ by $||a|-|l||<(1+\frac{1}{\beta})|a|$.

Performing this procedure for each arc of $c$ in
$\Sigma$ we obtain a new $2-$chain $A_1$
with boundary $c_1$, such that $c_1 \cap \Sigma = \varnothing$.

We have $|c_1|= O(|c|)$ and $||A|-|A_1||=O(|c|^2)$.
Using our proof for the sphere we obtain

$$|\frac{|M|}{2}-|A_1|| \geq \frac{3^{h-\frac{8}{3}|c_1|}}{2} \frac{|M|}{N} + O(|c_1|)\frac{|M|}{N} $$ 

Thus, $3^{h-O(|c|)} \leq O(|c|^2)$. If follows that $O(|c|)\geq h$. This finishes the proof of the theorem.

\textbf{Proof of Theorem \ref{width}}

We will prove that every sweep-out of $M$ by $1-$cycles contains a cycle $c$,
such that $Fill(c)= \frac{|M|}{2}$. 

Suppose not, i.e. there exists a non-contractible loop $z$ in $Z_1(M)$, such that 
for every $c \in z$ we have $Fill(c) \leq \frac{|M|}{2} - \epsilon$.

Consider a very fine subdivision of $S^1$ $\{t_1,...t_n \}$.
Then $z(t_i)$ and $z(t_{i+1})$ are two cycles that are very close in flat norm.
We can fill $z(t_i)-z(t_{i+1})$ by a $2-$chain $A_i$ of area $\leq \frac{\epsilon}{4}$.
Proceeding this way we construct a $2$-cycle $A$. It follows
from the proof of Almgren's isomorphism theorem
 that for all sufficiently fine subdivisions of $S^1$
$A$ will represent a non-trivial element of $H_2(M, \Bbbk)$.

For each $z(t_i)$ we can fill it by a $2-$chain $B_i$ of area 
$\leq \frac{|M|}{2} - \frac{\epsilon}{4}$. 
Then $|B_i+A_i-B_{i+1}| < |M|$. Hence, this 2-cycle can be filled
by a $3-$chain. Proceeding this way we construct a filling
of $A$, which is a contradiction.


\begin{thebibliography}{9}

\bibitem[1]{Al} F. Almgren, The homotopy groups of the integral cycle groups,
\textit{Topology} \textbf{1} (1960) 257-299
\bibitem[2]{BS} F. Balacheff and S. Sabourau, Diastolic and isoperimetric inequalities on surfaces, \textit{Ann. Sci. Ecole Norm. Sup.} \textbf{43} (2010) 579-605.
\bibitem[3]{BZ} Yu. Burago and V. Zalgaller, Geometric inequalities, translated by A. B. Sossinsky (Springer-Verlag, 1988).
\bibitem[4]{Fe} H. Federer, Geometric Measure Theory (Springer-Verlag, 1969).
\bibitem[5]{Fl} W. Fleming, Flat chains over a finite coefficient group, \textit{Trans. Amer. Math. Soc.} \textbf{121} (1966), 160-186.
\bibitem[6]{FK} S. Frankel and M. Katz, The Morse landscape of a Riemannian disc, \textit{Annales de l'Inst. Fourier} \textbf{43} (1993), no. 2, 503-507.
\bibitem[7]{G} L. Guth, The width-volume inequality, \textit{Geom. Funct. Anal.} \textbf{17:4} (2007) 1139-1179.
\bibitem[8]{Gro} M. Gromov, Filling Riemannian manifolds, \textit{J. Differential Geom.} Volume 18, Number 1 (1983), 1-147.
\bibitem[9]{L} Y. Liokumovich, Spheres of small diameter with long sweep-outs, \textit{Proc. Amer. Math. Soc.} \textbf{141} (2013), 309-312.
\bibitem[10]{LNR} Y.Liokumovich, A.Nabutovsky, R.Rotman, Contracting the boundary of a
Riemannian 2-disc, preprint, \href{http://arxiv.org/abs/1205.5474}{math arXiv:1205.5474}.
\bibitem[11]{P} J.Pitts, Existence and Regularity of Minimal Surfaces on Riemannian Manifolds, 
Math. Notes, Princeton University Press (1981).

\end{thebibliography}
\end{document}